\newtheorem{definition}{Definition}
\newtheorem{lema}{Lema}
\newtheorem{theorem}{Theorem}
\newtheorem{proof}{Proof}
\begin{document}
\title{Dynamic analysis in Greenberg's traffic model}
\maketitle
	
\begin{center}
Oscar A. Rosas--Jaimes\\
Facultad de Ciencias de la Electr\'onica, Benem\'erita Universidad Aut\'onoma de Puebla,\\
Prol. 24 Sur S/N Ciudad Universitaria, San Manuel, 72570 Puebla, México.\\
e-mail: oscar.rosasj@correo.buap.mx\\
\vspace{2em}
Luis A. Quezada--T\'{e}llez\\
Departamento de Matem\'aticas Aplicadas y Sistemas, UAM-Unidad Cuajimalpa,\\
Avenida Constituyentes 1054, 11950 Ciudad de M\'exico, M\'exico.\\
e-mail: lquezada@correo.cua.uam.mx\\
\vspace{2em}
Guillermo Fern\'andez--Anaya\\
Departmento de F\'{\i}sica y Matem\'aticas, Universidad Iberoamericana,\\
Prol. Paseo de la Reforma 880, \'Alvaro Obreg\'on, Lomas de Sta. Fe, Cd. de M\'exico,\\
e-mail: guillermo.fernandez@ibero.mx
\end{center}

\begin{abstract}
{Based on the classical traffic model by Greenberg, a linear differential equation, we analyze it by means of varying the critical velocity $v_o$ that appears in it as a parameter. In order to make such analysis we have obtained a solution for such a model and discretized it, obtaining related expressions for density $k$, flow $q$ and velocity $v$ to be treated as paired functions to obtained maps in phase-planes in which it is possible to observe distinct behaviors which span from monotonic and oscillatory stable trajectories, limit cycles of distinct periodicity, and chaotic ones. These behaviors are analyzed from a dynamical approach and then ilustrated with simulations performed in each case. As it is shown in this paper, these analyses are similar to those carried out in similar though simpler expressions (i.e. logistic-type functions), but taking in this case a new and direct approach through a nonlinear expression not used before to perform studies like these presented in this document, with a deep detail in the manner in which traffic variables are involved qualitatively and quantitatively.}

{\bfseries Keywords:} {Discrete Greenberg Model, Traffic Fundamental Diagram, Dynamical Analysis, Chaos.}
\end{abstract}

\section{Introduction}

Traffic of vehicles has a presence in every aspect of persons and products mobility that it has become a phenomenon by itself. The physics of traffic can be modeled by many approaches. Due to its probabilistic aspects, it can be seen as a set of stochastic models (see \cite{wang2013stochastic}), but it is undeniable that traffic observe rules and relations showing a deterministic character (see \cite{May90}). This ambiguous nature leads to adapt the best scheme depending on the planning, research or design needs.

Many traffic specialists prefer to follow more practical divisions that depend on the involved measures and variables. In this way, traffic can be explained by its individual units, the vehicles or small groups of them, through their positions, gaps, velocities or accelerations, obtaining microscopic models (see \cite{PanwaiDia2005}). These type of models have the advantage of focusing in fine details, making possible to distinguish among different classes of vehicles or even to simulate driving styles (see \cite{TreiberHelbing01}). However, these advantages are at the same time handicaps in other circumstances, due to they are difficult to analyze under a global perspective.

Most of the times, cumulative approaches are used in road networks because traffic needs to be emulated or analyzed as a continuum, using aggregated variables which are modeled, simulated, analyzed and controlled macroscopically as a stream of vehicles (see \cite{Lighthill1}, \cite{Lighthill2} and \cite{Richards56}). This point of view allows to manage a large set of road networks today, becoming the main reason of its extensive use by many professionals, from its first developments to nowadays (see for example \cite{MW2010}).

A first aspect of the nature of the traffic flow, independently of the type of model being used, is that it is nonlinear. As a consequence, all models related to traffic flow can exhibit a distinguishable behavior, ranging from stable and monotonous conditions to cyclic behaviours, which in turn can go from free-flow conditions to congested regimes, in the effort of emulating real situations (see \cite{Daganzo}).

% As a consequence, traffic flow can exhibit a distinguishable behavior, such as stable nodes, saturation, or periodic trajectories, %which appeared when the conditions or the parameters vary, ranging through all those possibilities until reaching a state where the %deterministic character of the models become unpredictable due to what it is known as their sensibility to the initial conditions, that %is to say, it becomes chaotic \cite{HillbornChaos}.

Through the distinct types of models it is possible to perform analysis in order to help understand and predict those dynamical behaviours observed in roads. However, it is well known that there is no model able to describe the whole range of possibilities that can be observed in any traffic scenario. Some of such models are better than others for a certain kind of analysis, while others have a better fit to necessities of different understanding.

In Section \ref{sec:TM} a brief view related to traffic models is developed, but it focuses on those models that have a macroscopic approach and their variables. These models have a useful and graphical tool that helps in understanding their theoretical background, known as fundamental diagrams, which represents basic relationships between pairs of common macroscopic variables. This document is devoted to the analysis of a specific behavior of traffic through three possible fundamental diagrams, paying attention to some trajectories that can be observed when an adjustable parameter is modified.

In this same section a very well-known classical model proposed by \cite{Greenberg59} is used in this manuscript to generate a discrete version to perform analysis based on the fundamental diagrams generated directly from it. Fundamental diagrams have very well identified generic forms, but their exact shapes depend on the traffic model over which they are calculated. Even though it has been shown that Greenberg's model is not well suited for free-flow values of velocity, it provides a sufficiently and easy approach to emulate average values of velocity $v$, density $k$ and flow $q$ of a stream of vehicles moving in a road.

Section \ref{sec:MatConc} presents some mathematical concepts and definitions, as well as some propositions that will be useful for the dynamical analyses applied to this discretized model, while Section \ref{sec:Sims} presents some graphical results from numerical simulations obtained by varying a parameter that is capable to modify the scale of these fundamental diagrams, which is equivalent to the modification of the system and its conditions, as when the capacity permits smaller or bigger volume of vehicles, or the environmental conditions allow or restrict the driving possibilities. it is possible to identify points and ranges that are related to those analysis and their respective plotting representations, from stable fixed points to chaotic orbits.

Many documents (see \cite{ShihChingLo2005}, \cite{Low1998} and \cite{Villalobos2010}) present analyses of chaotic trajectories on traffic model systems, but they are related to microscopic approaches and not to macroscopic ones. \cite{ShihChingLo2005} develop an analysis over Greenshield's model, even though they adapt that model to a logistic-type mapping only for a flow--density relation. In \cite{RosasPROMET2016} a polynomial approach is suggested for the calculation of a flow--density fundamental diagram, and then it is used to perform a discrete dynamical analysis. In the present case of this article the velocity--density and the velocity--flow fundamental diagrams are also included, and they are not adapted versions of a fundamental diagram model, but they are directly derived from the solution of Greenberg's model and other basic expressions in traffic theory.

 This has resulted in a new discrete nonlinear expression not used before to achieve stable, cyclic and chaotic trajectories.

%Due to the extension of the analysis that can be achieved, it is necessary to point out some definitions, to identify some %demostrable propositions in the form of theorems and lemas and to establish some proper classification of concepts and other %elements that will be given in Section \ref{sec:MatConc}. On the other hand, based on the obtained analysis, simulations are %performed over the three fundamental diagrams described, showing not only the trajectories that result from the variation of the %critical parameter, but it is possible to identify points and ranges that are related to those analysis and their respective discussions, %met in Section \ref{sec:Sims}.

In Section \ref{sec:LyapExp}, further analysis is provided through measuring the chaos character, such as the divergence speed of the nearby trajectories using Lyapunov exponents, which are directly related with the nonlinear nature of the model used, being at the same time a qualitative indicator of stable or chaotic behavior of the system.

At the end, some concluding remarks are written for the main results and consequences obtained.

\section{\label{sec:TM}Traffic Models}

\subsection{\label{FD_Sec}Fundamental Diagrams}
Traffic models arise mainly from a necessity of understanding phenomena that have strong implications in economic and social aspects of modern life, specially when the physics of such phenomena has to do with an increasing frequency congestion (see \cite{Beckmann2013}). First attempts to produce them began with statistical approaches as well as analogies with other physical phenomena (see \cite{Lighthill1}, \cite{Lucic2002} and \cite{Wohl}), with the intention they match up with real data, which seem to follow curves that began to be called {\em fundamental diagrams}.

\begin{figure}[ht]
\centering
\includegraphics[width=1.0\textwidth]{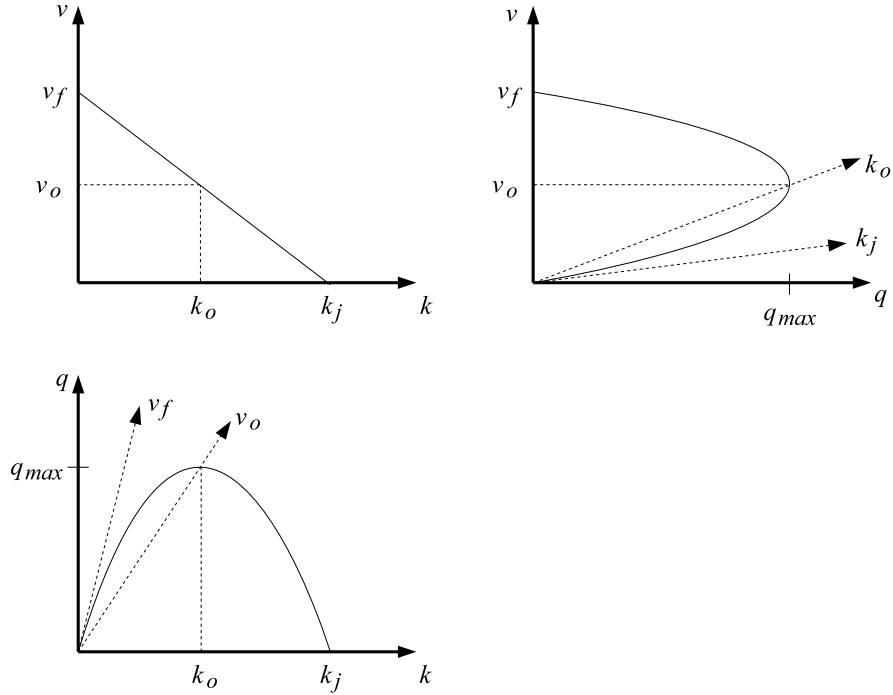}
\caption{Plots of fundamental relations among flow ($q$), velocity ($v$) and density ($k$)}
\label{DiagFundCurves}
\end{figure}

Figure \ref{DiagFundCurves} shows the flow-density map, the velocity-density map and the velocity-flow map for a linear velocity-density relation model, from where the other two relationships can be obtained (see \cite{May90}). These functions are idealizations of the true points that represent the values of the traffic macroscopic variables, and their utility lies in that they are the best comprehensive approaches for any set of traffic macroscopic plots.

In a macroscopic approach to traffic flow, velocity $v$ is referred as the velocity of the wave front of vehicles, many times considered as the maximum velocity reached by the average of the cars. When congestion appears, it can be more valuable to know the backward front of congestion velocity $w$, a quantity that measures a congestion wave front that moves in the opposite direction to traffic flow (see \cite{kerner2012physics}). However, few works have taken advantage of this quantity (see for example \cite{RosasJDS07}).

Flows $q$ are quantities related to a set of vehicles moving with respect to time. It can be a number of cars trespassing a point in a
period of time or a set of cars traversing a section of a way in a time interval (see \cite{ITE2009}).

Density $k$ is defined as the number of vehicles occupying a section of a lane or stretch of a road. Direct measure of density can be get by air photographs, videorecorded images or by \textit{in situ} observations, limiting a length of a way and counting the vehicles
present on it in a moment of time (see \cite{Thamizh2010}).

These three quantities are simplistic related through
\begin{equation}
q=vk \label{qvk}
\end{equation}
and relations between pairs of them are represented in Figure \ref{DiagFundCurves}. The form of these curves is rather descriptive and at the same time idealized. It depends on particular cases of particular roads and their conditions. Data sets of each of them depict a complete and continue function but it is little probable to find the whole range of values of each variable in a measuring location. Data obtained in real life have multiple discontinuities in which many parts of these curves are not present (see \cite{KimZhang}).

Nevertheless these curves illustrates several significative points. Note that null flow occurs in two different conditions:
\begin{enumerate}
\item When there are no cars in the road, density and flow are zero. Velocity is theoretical and it will be that of the first driver appearing, supposedly a high value. This velocity is represented in the fundamental diagram as $v_f$ and it is named {\em free-flow velocity}.
\item When density becomes so high that all vehicles are forced to stop, flow is zero again, due to there is no movement. The density in this situation is known as {\em jam density} and it is referred to as $k_j$.
\end{enumerate}

Between these two extremes there are many conditions of vehicular flow. As density increments from zero, flow does the same, due to there are more cars on the road, but as this happens velocity declines, because of the growing interaction among vehicles. This decrement in velocity is imperceptible when densities and flows are low.

Velocity decreases significatively a little before reaching the maximum flow. This condition is showed in the diagrams of Figure \ref{DiagFundCurves} as the \textit{critic velocity} or \textit{optimum velocity} $v_o$, the \textit{optimum density} $k_o$ and the \textit{maximum flow} $q_{max}$.

Slope of a straight line drawn from the origin of the velocity-flow diagram towards any point in the curve represents density. Likewise, a straight line from the origin of the density-flow diagram to any point over the curve represents velocity $v$. These slopes can be calculated from Equation (\ref{qvk}). Another important slope is that corresponding with the backward front velocity of congestion $w$, not depicted in the figure, but that would be plotted from the jam density extreme and with a negative slope.

Notice that the three diagrams showed are redundant, because once established a relation between two of the variables, the another stays defined. Each of these relations (and their respective fundamental diagram) has its own field of application. For example, the velocity-density diagram is the base for vehicular flow models, because for a density value corresponds only a velocity value. The flow-density relationship is the point of departure for traffic control, due to it is possible to identify easily those regions where the traffic can be consider free, congested or in transition. The velocity-flow relationship is useful because it depicts regions of these values that can be related directly with levels of service in the roads (see \cite{ITE2009} and \cite{May90}).

As can be observed in Figure \ref{DiagFundCurves}, any flow value distinct of the maximum can occur in two different conditions, one with low density and high velocity, and another with high density but low velocity. The portion of the curves for this last case represents the congested situation, with sudden changes in the traffic, some of them periodic, some of them chaotic, as will be seen later.

\subsection{\label{GM}Greenberg's Model}
Equation (\ref{qvk}) relates density, velocity and flow, but it tells little about the way in which the corresponding data matches with the model used, and several fitness schemes has been proposed. Unfortunately, traffic flow data seem to be quite complex, and no model is able to achieve a perfect fit. Some models are better than others for different traffic flow regimes.

Greenberg's model is a well stablished expression for macroscopic traffic, the result of observing velocity-density data sets for tunnels, specially those data that describe congestion (see \cite{Greenberg59}). This author correlated this information with the hydrodynamic analogy (given by the set of works by \cite{Lighthill1}, \cite{Lighthill2} and \cite{Richards56}) due to the equation of motion of a one-dimensional fluid
\begin{equation}
\frac{dv}{dt}=-\frac{v_o^2}{k} \frac{\partial k}{\partial x}
\label{OneDimF}
\end{equation}
where $x$ is the distance coordinate along the road, $t$ is time, the parameter $v_o$ is the optimum or critical velocity, and $v$ and $k$ are the already-known variables for velocity and density.

Due to velocity is a function of distance and time, and by the properties of the total derivative, then Equation (\ref{OneDimF}) can be written in the form
\begin{equation}
\frac{dv}{dk}\frac{\partial k}{\partial t} +
v\frac{dv}{dk}\frac{\partial k}{\partial x} +
\frac{v_o^2}{k}\frac{\partial k}{\partial x} = 0
\label{OneDimF2}
\end{equation}

which can be divided by $dv/dk$ to obtain
\begin{equation}
\frac{\partial k}{\partial t} + \left( v +
\frac{v_o^2}{k}\frac{dk}{dv}\right) \frac{\partial k}{\partial x} =
0 \label{OneDimF3}
\end{equation}

Greenberg then uses the mass conservation expression
\begin{equation}
\frac{\partial k}{\partial t}+\frac{\partial q}{\partial x}=0
\label{continuity}
\end{equation}

which, by means of Equation (\ref{qvk}), can be written as
\begin{equation}
\frac{\partial k}{\partial t} + v\frac{\partial k}{\partial x} +
k\frac{dv}{dk}\frac{\partial k}{\partial x} = 0
\label{continuity2}
\end{equation}

In this way, it is possible to describe the behavior of vehicles using Equations (\ref{OneDimF3}) and (\ref{continuity2}).

As this pair of equations constitutes a system, a non-trivial solution is achieved by this author as
\begin{equation}
\frac{dv}{dk} = - \frac{v_o}{k} \label{GreenbSol}
\end{equation}

Differential equation (\ref{GreenbSol}) follows the trajectory given by
\begin{equation}
\label{GreenbExp}
v = v_o \ln \left( \frac{k_j}{k} \right)
\end{equation}

which functional plot is shown in Figure \ref{FGreenbExpqvk} in the portion that corresponds to the velocity-density fundamental diagram where, as can be seen, infinity values for free-velocity are calculated. Even though this fact can be seen as a drawback for this model, Greenberg was able to show that for regions going away from those free-velocity values the model adjusts quite well.

\begin{figure}[ht]
\centering
\includegraphics[width=1.0\textwidth]{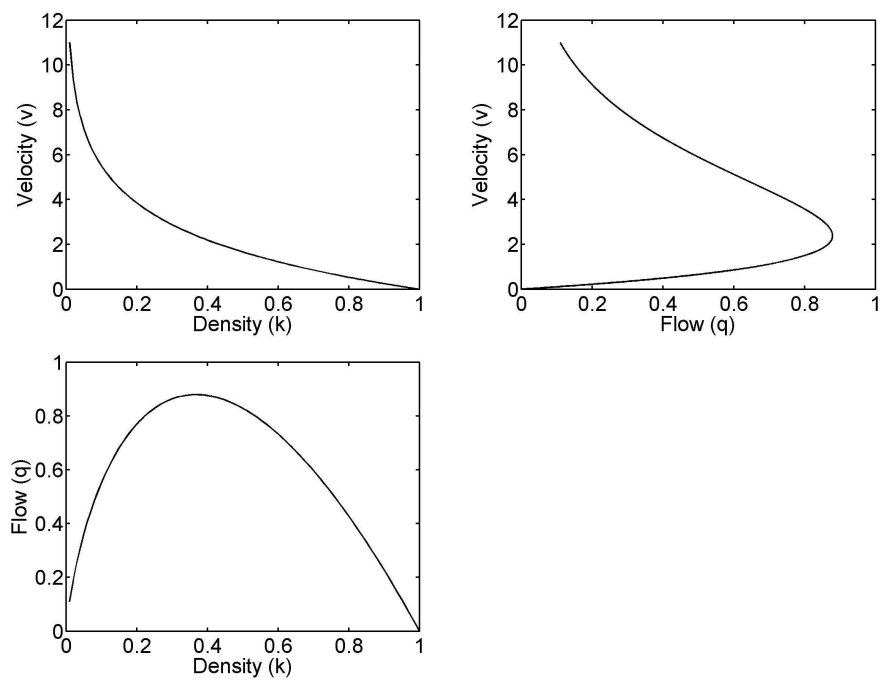}
\caption{Fundamental Diagrams based on Greenberg's Traffic Model with normalized values} \label{FGreenbExpqvk}
\end{figure}

Combining equations (\ref{qvk}) and (\ref{GreenbExp}) results in a relationship for density $k$ and flow $q$
\begin{equation}
\label{GreenbExpqk}
q = v_o \,k \ln \left( \frac{k_j}{k} \right)
\end{equation}
which is depicted in Figure \ref{FGreenbExpqvk} in the respective portion of the flow-density fundamental diagram.

The corresponding plot for the velocity-flow relationship is easier to get through a set of values obtained from flow and density data
through the Equation (\ref{qvk}) in the form
\begin{equation}
k=\frac{q}{v} \label{vqk}
\end{equation}
This one is represented in its respective portion of Figure \ref{FGreenbExpqvk}. This figure gives a deeper detail of the relationship of the variables density $k$, flow $q$ and velocity $v$ in comparison to Figure 1. In this way, an improved accuracy for the fundamental diagrams used for the analysis is shown by Figure 2. 

\cite{Greenberg59} calculated and analyzed parameter values such as the maximum flow $q_{max}$, the optimum velocity $v_o$ and the jam density $k_j$. These last two expressions are obtained throughout direct observations of graphical representations of the data.

Real collections of traffic data show big variance and dispersion and, as has been mentioned earlier, it is difficult for any model to
achieve a satisfactory fit to them. \cite{Greenberg59} commented on this fact, pointing out that his model represents a very good average of any set of traffic data for a wide range of road conditions, what has been confirmed by latter measurements (see \cite{May90}).

Let the set of equations (\ref{GreenbExp}), (\ref{GreenbExpqk}) and (\ref{vqk}) be the functions which describe the relationship between each pair of the macroscopic variables of traffic. In order to generalize a result, let the maximum density $k_j=1$. with this assumption and from the flow-density function, it is possible to notice that the maximum flow $q_{max}$ occurs when $k_j/k = e$, and the optimum density is then $k_o=e^{-1}$.

With these values, a normalization can be established for this set of functions, allowing to lead interesting analyses about some generalized properties of those expressions related to the general shape of the fundamental diagrams obtained directly by the Greenberg's model.

\begin{figure}[ht]
\centering
\includegraphics[width=0.9\textwidth]{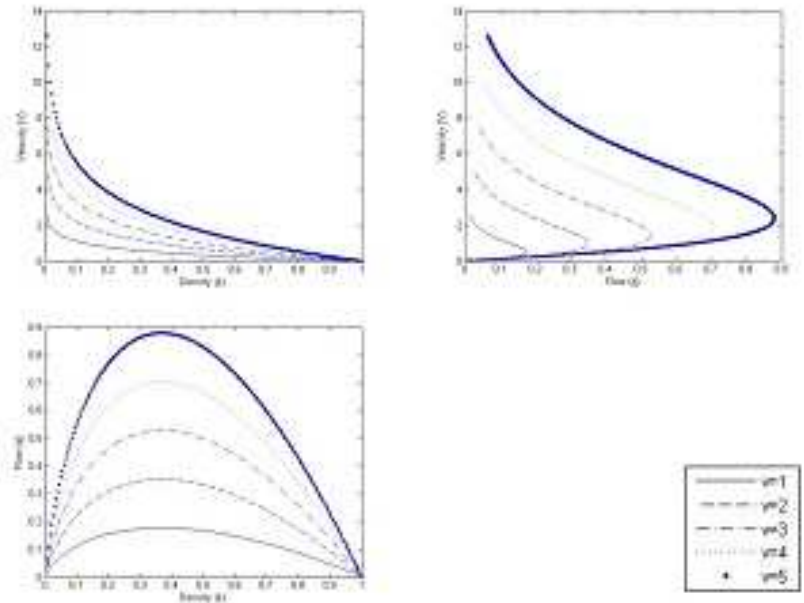}
\caption{Fundamental Diagrams based on Greenberg's Traffic Model showing the variation of their shapes through five different values of $v_o$} \label{fig:Greenb5vo}
\end{figure}

These analyses will be conducted through the optimum velocity $v_o$, a parameter that can be used to adjust these fundamental diagrams to different type of traffic data, with the effect of enlarging or shrinking their shapes. Figure \ref{fig:Greenb5vo} shows the aspect obtained by the fundamental diagrams for five distinct values of $v_o$. Physically, this is equivalent to modify the road conditions for the model, since for a bigger value of $v_o$ corresponds a bigger capacity or some other enhanced characteristic for vehicles displacement.

The parameter $v_o$ is left constant for each specific process and modifies the functions (\ref{GreenbExp}) and (\ref{GreenbExpqk}) of variable density $k$. These functions, along with that of Equation (\ref{vqk}) preserve the shape of the respective fundamental diagrams as seen in Figure \ref{fig:Greenb5vo}. By the normalization made, it is possible to see that $k \in \left[ 0\,1 \right]$ and $q \in \left[ 0\,1 \right]$.

We want to treat these fundamental diagrams in a similar manner as is done in the paper by \cite{RosasPROMET2016}. In that paper, only the flow--density fundamental diagram is tested by means of a polynomial expression. In this present work we are spanning such treatment to those three fundamental diagrams already mentioned, using Greenberg's model. In consequence, each of these three fundamental diagrams are supposed to achieve an independent and parallel iterative process of the form $k_{i+1} = Q\left( k_i \right)$, where $Q\left( k_i \right)$ is the particular function (\ref{GreenbExp}) or (\ref{GreenbExpqk}).

%Well will suppose that actual values in the functions (\ref{GreenbExp})--(\ref{vqk}) condition the next state in a discrete scheme. %In this way, the particular iterative scheme for the function (\ref{GreenbExpqk}) is given by a discrete variable $q(i)$

\begin{equation}
Q_v(k_{i + 1}) = v_o \ln \left( \frac{k_j}{k(i)} \right),\,\, i=1,2,3,...
\label{yGreenbExp}
\end{equation}
%which, by the normalization carried out, it is equalized to the values of density by
%\begin{equation}
%k(i+1)=q(i)
%\label{kv}
%\end{equation}
%Or equivalent
\begin{equation}
Q_q(k_{i+1}) = v_o \, k(i) \ln \left( \frac{k_j}{k(i)} \right),\,\, i=1,2,3,...
\label{kiter}
\end{equation}

That is to say, starting from an initial condition, these expressions will generate a new value for $Q_v(k_{i+1})$ and $Q_q(k_{i+1})$ from previous inputs $Q_v(k_i)$ and $Q_q(k_i)$, constructing sequences $\left\{ Q_v(k_n) \right\} = \left\{Q_v(k_0)\, Q_v(k_1), \cdots , Q_v(k_i), \cdots \right\}$ and $\left\{ Q_q(k_n) \right\} = \left\{Q_q(k_0)\, Q_q(k_1), \cdots , Q_q(k_i), \cdots \right\}$ in an iterative way, and all this values will create a set that can be registered and plotted. These processes have been performed in other works over logistic-type functions (see, for example \cite{Thamizh2010}, \cite{Devaney1987}, \cite{Holmgren1994} and \cite{ShihChingLo2005}).

As it was mentioned earlier, one of the main purposes of this work is to fit the shapes obtained from Greenberg's model to calculate the same iterative processes that appear in those published papers, where such iterations are done over one-dimension functions of the form $X_{i+1} = F(X_i)$. Even though Equations (\ref{GreenbExp}) and (\ref{GreenbExpqk}) are evidently two-dimension functions, due to the normalization properties, the fundamental diagrams obtained can follow the same processes.

Physically, this would be the case of a closed traffic network composed by arcs and nodes, all included in a control volume, but only a point of measuring is taken into account to know the state, and interactions and geometry of this network will be reflected in that sole point.

In order to complete our three-variable scheme, density $k$ is calculated in a parallel form by
\begin{equation}
k(i+1) = \frac{Q_q(k_i)}{Q_v(k_i)}
\label{kiter1}
\end{equation}

%It is relevant to say that this iteration scheme here proposed is similar to those performed over a logistic map %\cite{ShihChingLo2005}, wich is very close in many ways to the simpler shapes depicted in Figure 1, but instead to adapt %Greenberg's model to a logistic-type function, we use the nonlinear solution of its differential equation (\ref{GreenbSol})  directly, %trying to keep the traffic description as accurate as possible in order to relate this type of analysis to a true traffic model and their %fundamental diagrams directly originated from it.

\section{\label{sec:MatConc}Mathematical Concepts}
In this section, some useful definitions, theorems and lemmas are presented, which will be used for the analyses that will be developed through the rest of this article.

\subsection{Fixed Points in Greenberg's Discrete Model}
\vspace{1em}

\begin{definition}\label{DefDiscreteSyst}
	(\cite{Luo2012}) For $\Omega_{\alpha} \subseteq \mathbb{R}^{n}$ and $\Lambda \subseteq 				\mathbb{R}^{m}$ which $\alpha \in \mathbb{Z}$, consider a vector function $\mathbf{f}_{\alpha}: \Omega_{\alpha} \times 			\Lambda \rightarrow \Omega_{\alpha}$ wich is $C^r(r\geq 1)$-continuous, and there is a discrete equation in the form of
		\begin{equation}
		\mathbf{k}_{i+1}=\mathbf{f_\alpha}(\mathbf{k}_{i},\mathbf{p_\alpha}) 
		\label{kiterf}
		\end{equation}
	for $\mathbf{k}_{i},\mathbf{k}_{i+1} \in \Omega_{\alpha}, i \in \mathbb{Z}$ and $\mathbf{p}_{\alpha} \in \Lambda$. With 			an initial condition of $\mathbf{k}_{i}=\mathbf{k}_{0}$, the solution of equation (\ref{kiterf}) is given by
		\begin{equation}
		\mathbf{k}_{i}=\mathbf{f}_{\alpha}(\mathbf{f}_{\alpha}(\cdots(\mathbf{f}_{\alpha}(\mathbf{k}_{0},						\mathbf{p}_{\alpha}))))
		\end{equation}
	for $\mathbf{k}_{i} \in \Omega_{\alpha}, i \in \mathbb{Z}$ and $\mathbf{p} \in \Lambda$.
		\begin{enumerate}
		\item The difference equation with the initial condition is called a discrete dynamical system.
		\item The vector function $f_\alpha(k_i, p_\alpha)$ is called a discrete vector field on domain $\Omega_\alpha$.
		\item the solution $k_i$ for all $i \in \mathbb{Z}$ on domain $\Omega_\alpha$ is called the								\text{\normalfont{trajectory}}, \text{\normalfont{phase curve}} or \text{\normalfont{orbit}} of discrete dynamical 				system, which is defined as
			\begin{equation}
			\Gamma = \left\{ k_i | k_{i+1} = \mathbf{f_\alpha}(\mathbf{k}_{i},\mathbf{p_\alpha}) \mbox{ for } k \in 						\mathbb{Z} \mbox{ and } \mathbf{p}_{\alpha} \in \Lambda \right\} \subseteq \cup_\alpha\Omega_\alpha
			\end{equation}
		\end{enumerate}
\end{definition}
\vspace{1em}

\begin{definition}\label{DefFixPoint}
	(\cite{Luo2012}) Consider a discrete, nonlinear dynamical system $\mathbf{k}_{i+1}=\mathbf{f}				(\mathbf{k}_{i},\mathbf{p})$. A point $\mathbf{k}^{*}_{i} \in \Omega_{\alpha}$ is called a \text{\normalfont{fixed point}} 			or a \text{\normalfont{period-1}} solution of a discrete nonlinear system $\mathbf{k}_{i+1}=\mathbf{f}(\mathbf{k}_{i},			\mathbf{p})$ under map $P_{i}$ if for $\mathbf{k}_{i+1}=\mathbf{k}_{i}=\mathbf{k}^{*}_{i}$
		\begin{equation}
		\mathbf{k}^{*}_{i}=\mathbf{f}(\mathbf{k}^{*}_{i},\mathbf{p})
		\end{equation}
	The linearized system of the nonlinear discrete system $\mathbf{k}_{i+1}=\mathbf{f}(\mathbf{k}_{i},\mathbf{p})$ at the 			fixed point $\mathbf{k}^{*}_{i}$ is given by
		\begin{equation}
		\mathbf{y}_{i+1}=DP(\mathbf{k}^{*}_{i},\mathbf{p})\mathbf{y}_{i}=D\mathbf{f}(\mathbf{k}^{*}_{i},						\mathbf{p})\mathbf{y}_{i}
		\end{equation}
	where,
		\begin{equation}
		\mathbf{y}_{i}=\mathbf{k}_{i} - \mathbf{k}^{*}_{i} \ and \ \mathbf{y}_{i+1}=\mathbf{k}_{i+1} - 						\mathbf{k}^{*}_{i+1}
		\end{equation}
\end{definition}

By Definition \ref{DefFixPoint} and if $\mathbf{k}_{j}=1$ as mentioned in Section \ref{sec:TM}, a fixed point in Greenberg's model is
\begin{equation}
\mathbf{k}^{*}_{i} = e^{-\frac{1}{v_{o}}}
\label{GreenbergFixPoint}
\end{equation}

Derivation of (\ref{kiter}) gives
\begin{equation}
\frac{d}{dk_{i}}\left[v_{o}\ \mathbf{k}_{i}\ ln\left(\frac{\mathbf{k}_{j}}{\mathbf{k}_{i}}\right)\right] = -v_{o} + v_{o}\left(ln\mathbf{k}_{j} - ln\mathbf{k}_{i}\right)
\end{equation}

Evaluating this derivative with $k_j=1$
\begin{equation}
\frac{d}{dk_{i}}\left[v_{o}\ \mathbf{k}_{i}\ ln\left(\frac{\mathbf{k}_{j}}{\mathbf{k}_{i}}\right)\right] \bigg|_{\mathbf{k}^*_{i}} = -v_{o}(1-\frac{1}{v_{o}}) = -v_{o} + 1 
\end{equation}

\begin{definition}\label{FixPointTypes}
	(\cite{Luo2012})  Consider a discrete nonlinear dynamical system $\mathbf{k}_{i+1} = \mathbf{f}				(\mathbf{k}_{i},\mathbf{p})$ with a fixed point $\mathbf{k}^*_{i}$. The corresponding solution is given by 					$\mathbf{k}_{i+j} = \mathbf{f}(\mathbf{k}_{i+j-1},\mathbf{p})$ with $j\in\mathbb{Z}$. Suppose there is a neighborhood of 		the fixed point $\mathbf{k}^*_{i}(i.e.,\hspace{0.001cm} U_{i}(\mathbf{k}^*_{i})\subset\Omega_{\alpha})$, and 				$\mathbf{f}(\mathbf{k}_{i},\mathbf{p})$ is $C^r(r\geq1)$-continuous in $U_{i}(\mathbf{k}^*_{i})$. The linearized system is 		$\mathbf{y}_{i+j+1} = D\mathbf{f}(\mathbf{k}^*_{i},\mathbf{p})\mathbf{y}_{i+j}(\mathbf{y}_{i+j}=\mathbf{k}_{i+j} - 			\mathbf{k}^*_{i})$ in $U_{i}(\mathbf{k}^*_{i})$. The matrix $D\mathbf{f}(\mathbf{k}^*_{i},\mathbf{p})$ possesses $n$ 			eigenvalues $\lambda_{q}\hspace{1em}(q=1,2,...,n)$.
		\begin{enumerate}
		\item The fixed point $\mathbf{k}^*_{i}$ is called a hyperbolic point if $\lvert \lambda_{q} \rvert 	\neq 1$ \hspace{1em}$(q=1,2,...,n)$.
		\item The fixed point $\mathbf{k}^*_{i}$ is called a sink if  $\lvert \lambda_{q} \rvert < 1$\hspace{1em}$(q=1,2,...,n)$.
		\item The fixed point $\mathbf{k}^*_{i}$ is called a source if $\lvert \lambda_{q} \rvert > 1$\hspace{1em}$(q=1,2,...,n)$.
		\item The fixed point $\mathbf{k}^*_{i}$ is called a center if $\lvert \lambda_{q} \rvert = 1$\hspace{1em}$(q=1,2,...,n)$ with distinct eigenvalues.
		\end{enumerate}
\end{definition} 
\vspace{1em}

We apply these definitions to the particular case of the fixed point found for Greenberg's model. Notice that $v_0$ is always a non-negative value. Therefore:

\begin{enumerate}
	\item For  $\lvert-v_{o} + 1\rvert = 1$ only when $v_0 = 0$. Except for such a value $\mathbf{k}^*_{i}$ is a hyperbolic 			point.
	\item For $v_{0} \in \left(0,2\right)$, we have $\lvert-v_{o} + 1\rvert < 1$, then $\mathbf{k}^*_{i}$ is a sink.
	\item For $v_{0} \in (-\infty,0) \cup (2,\infty)$, it is obtained $\lvert-v_{o} + 1\rvert > 1$ and then $\mathbf{k}^*_{i}$ is a 			source.
	\item For values $v_{0}=0$ and $v_{0}=2$, $\lvert-v_{o} + 1\rvert = 1$ and in such cases a pair of centers result, a 				hyperbolic and a non-hyperbolic.
	\end{enumerate}
\vspace{1em}

\subsection{Bifurcation in Greenberg's Discrete Model }
\vspace{1em}

\begin{definition}\label{bifurcation}
	\cite{Luo2012} Consider a 1-D map
		\begin{equation}
		\label{mDmap}
		P: \mathbf{k}_{i} \rightarrow \mathbf{k}_{i+1} \hspace{0.5cm} with \hspace{0.5cm} \mathbf{k}_{i+1}=\mathbf{f}				(\mathbf{k}_{i},\mathbf{p})
		\end{equation}
	where $\mathbf{p}$ is a parameter vector. To determine the period-$1$ solution (fixed point) of Equation \ref{mDmap}, substitution of $k_{i+1}=k_{i}$ into Equation \ref{mDmap} yields the periodic solution $k_{i}=k^*_{i}$. The bifurcation of the period-$1$ solution is presented. 
		\begin{enumerate}
		\item Period-doubling bifurcation
			\begin{equation}
			\frac{dk_{i+1}}{dk_{i}}=\frac{df(k_{i},\mathbf{p})}{dk_{i}}\bigg|_{\mathbf{k}_{i}=\mathbf{k}^*_{i}}=-1
			\label{PitchBifur}
			\end{equation}
		\end{enumerate}
\end{definition}

As it will be depicted in Section \ref{sec:Sims}, Greenberg's model (\ref{vqk})--(\ref{kiter}) present $n$-period cycles in its trajectories and bifurcations in corresponding mappings. Taking $n=1$ in (\ref{mDmap}) a period-doubling bifurcation presents when $v_{0} = 2$, through Definitions \ref{FixPointTypes} and \ref{bifurcation}.

\begin{equation}
\frac{df(k_{i},\mathbf{p})}{dk_{i}}\bigg|_{\mathbf{k}_{i}=\mathbf{k}^*_{i}}=-v_{o} + 1=-1
\end{equation}

And then
\begin{equation}
v_{0} = 2
\end{equation}

\subsection{Stability analysis for Greenberg's Discrete Dynamic System}
In this last subsection, we include the fundamentals of stability for a nonlinear discrete dynamical system (see \cite{Ngoc2012}). Consider a nonlinear discrete-time system of the form 

\begin{equation}
\mathbf{k}_{i+1} = \mathbf{f}(i,\mathbf{k}_{i}), i \geq i_{0}
\end{equation}

where $\mathbf{f}: \mathbb{Z_{+}} \times \mathbb{R}^{n} \rightarrow \mathbb{R}^{n} $ is a given function such that $\mathbf{f}(i,0) = 0$, for all $i \in \mathbb{Z_{+}}$ (i.e., $\xi = 0$ is an equilibrium of the system (\ref{kiter})). 
It is clear that for $i_{0} \in \mathbb{Z_{+}}$ and $\mathbf{k}_{0} \in \mathbb{R}^{n}$, (\ref{kiter}) has a unique solution, denoted by $\mathbf{k}_{i}(\centerdot,i_{0},\mathbf{k}_{0})$ satisfaying the initial condition 
\begin{equation}
\mathbf{k}_{i=0} = \mathbf{k}_{0}
\end{equation}

\begin{definition}\label{ExpStable}
(\cite{Ngoc2012}) The \text{\normalfont{zero solution}} of (\ref{kiter}) is exponentially stable if there exist $M \geq 0$ and $\beta \in [0,1)$ such that:
	\begin{equation}
	\label{ExpCond}
	\forall i,i_{0} \in \mathbb{Z_{+}}, i \geq i_{0}; \forall \hspace{0.05cm} \mathbf{k}_{0} \in \mathbb{R}^n: \hspace{0.05cm}   			\parallel \mathbf{k}(i,i_{0},\mathbf{k}_{0})\parallel \leqslant M\beta^{i-i_{0}}\parallel \mathbf{k}_{0}\parallel
	\end{equation}
\end{definition}

A simple sufficient condition for exponential stability to (\ref{kiter}) is given by the following lemma \ref{lema1}. 

\begin{lema}\label{lema1}
(\cite{Ngoc2012}) Suppose there exists $ A \in \mathbb{R}^{n \times n}_{+} $ such that $ \lvert \mathbf{f}(i,\mathbf{k}_{i}) \rvert \leq A \lvert \mathbf{k}_{i} \rvert $, \hspace{0.01cm} $ \forall i \in \mathbb{Z_{+}}$, $ \forall \hspace{0.05cm} \mathbf{k}_{i} \in \mathbb{R}^n $. If $\rho(A) < 1 $ then the zero solution of (\ref{kiter}) is exponentially stable.
\end{lema} 

Direct application to (\ref{kiter}) is given in the following theorem.

\begin{theorem}\label{th1}
Consider the scalar system $\mathbf{k}_{i+1} = v_{0} \mathbf{k}_{i} \ln\left(\dfrac{\mathbf{k}_{j}}{\mathbf{k}_{i}}\right)$. If $\lvert v_{0}\rvert < 1$ and $\lvert \mathbf{k}_{i} \lvert < 1, \forall i \in \mathbb{Z}_{+}$  then the system is exponentially stable.  
\end{theorem}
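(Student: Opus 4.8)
The plan is to deduce the statement directly from Lemma \ref{lema1}, specialized to $n=1$. In the scalar setting the hypothesis of that lemma asks only for a single nonnegative constant $A$ with $|f(k_i)|\le A|k_i|$ for all admissible states, together with $\rho(A)=A<1$; its conclusion is then exactly the exponential decay bound of Definition \ref{ExpStable} for the zero solution. So the entire proof should reduce to writing the right-hand side $f(k_i)=v_0 k_i\ln(k_j/k_i)$ as (a bounded multiplier)$\times k_i$ and bounding that multiplier by a number strictly below $1$.

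First I would invoke the normalization $k_j=1$ fixed in Section \ref{sec:TM}, so that $f(k_i)=v_0 k_i\ln(1/k_i)=-v_0 k_i\ln k_i$ and hence
\begin{equation}
|f(k_i)| = |v_0|\,\big(-\ln k_i\big)\,|k_i|, \qquad k_i\in(0,1).
\end{equation}
The natural candidate for the comparison constant is therefore $A=|v_0|\,\sup_i\big(-\ln k_i\big)$, and I would then try to combine the hypothesis $|v_0|<1$ with the standing assumption $|k_i|<1$ to force $A<1$, after which Lemma \ref{lema1} closes the argument at once.

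The main obstacle — and the step I would scrutinize hardest — is controlling the multiplier $-\ln k_i=\ln(k_j/k_i)$. On $(0,1)$ this factor is unbounded as $k_i\to 0^{+}$, so the assumption $|k_i|<1$ by itself yields no finite constant $A$, let alone one below $1$; in fact $|f(k_i)|/|k_i|=|v_0|(-\ln k_i)$ equals $1$ exactly at the interior fixed point $k_i^{*}=e^{-1/v_0}$ of (\ref{GreenbergFixPoint}) and exceeds $1$ for smaller $k_i$. I would therefore expect a clean constant $A<1$ to require either restricting the admissible states to a subinterval of $(0,1)$ bounded away from $0$ (and from $k_i^{*}$), or replacing the crude factorization by the elementary inequality $-\ln x\le\tfrac1x-1$, which gives $-k_i\ln k_i\le 1-k_i$ and hence $|f(k_i)|\le|v_0|(1-k_i)$. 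Converting the latter into the lemma's required form $A|k_i|$ with a genuine constant $A<1$ is the crux, and I would check carefully whether the two stated hypotheses by themselves suffice or whether an additional lower bound on $k_i$ is tacitly being used.
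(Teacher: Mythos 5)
Your plan follows exactly the route the paper itself takes: factor $f(k_i)=v_0k_i\ln(1/k_i)$ as a multiplier times $k_i$ and feed a constant $A<1$ into Lemma \ref{lema1}. The obstacle you flag at the end is not a technicality you failed to see around --- it is a genuine defect, and the paper's own proof does not overcome it. After writing $\lvert k_{i+1}\rvert=\lvert v_0\rvert\,\lvert k_i\rvert\,\lvert\ln k_i\rvert$, the paper simply asserts
\begin{equation}
\lvert v_0\rvert\,\lvert\ln k_i\rvert \;<\; \lvert v_0\rvert\,\lvert k_i\rvert ,
\end{equation}
i.e. $\lvert\ln k_i\rvert<\lvert k_i\rvert$, and from there derives $\lvert k_{i+1}\rvert<\lvert v_0\rvert\lvert k_i\rvert^2\le\lvert v_0\rvert\lvert k_i\rvert$. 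But on $(0,1)$ the inequality $-\ln x<x$ holds only for $x$ above the root of $-\ln x=x$ (approximately $0.567$); for smaller $x$ it reverses, and $-\ln x\to\infty$ as $x\to0^{+}$. So the hypothesis $\lvert k_i\rvert<1$ does not support this step, exactly as you suspected, and no finite comparison constant $A$ exists on all of $(0,1)$.

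Your sharper observation --- that $\lvert f(k_i)\rvert/\lvert k_i\rvert=\lvert v_0\rvert(-\ln k_i)$ equals $1$ at $k_i=e^{-1/\lvert v_0\rvert}$ and exceeds $1$ below it --- in fact shows the conclusion cannot hold in the sense of Definition \ref{ExpStable}. For $0<v_0<1$ the point $k^{*}=e^{-1/v_0}\in(0,e^{-1})$ satisfies $\lvert k^{*}\rvert<1$ and is a fixed point of the map (a sink, since $f'(k^{*})=1-v_0\in(0,1)$), so the orbit started there is constant and nonzero and violates any bound $\lVert k_i\rVert\le M\beta^{\,i-i_0}\lVert k_0\rVert$ with $\beta<1$; moreover $f'(k)=-v_0(1+\ln k)\to+\infty$ as $k\to0^{+}$, so the zero solution is actually repelling. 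Hence neither a cleverer estimate nor the bound $-k_i\ln k_i\le 1-k_i$ can rescue the argument without restricting the admissible states away from zero and from $k^{*}$, or reinterpreting ``exponentially stable'' as convergence to the nonzero fixed point rather than to the origin. Your hesitation at the crux step was the correct reaction; the two stated hypotheses do not suffice, and the paper's proof closes the gap only by invoking an inequality that is false for small $k_i$.
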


\begin{proof}\label{proof1}
	From equation (\ref{kiter})
	\begin{equation}
	\mathbf{k}_{i+1} = v_{0}\mathbf{k}_{i}ln \left(\frac{\mathbf{k}_{j}}{\mathbf{k}_{i}}\right)
	\end{equation}

	let $\mathbf{k}_{j} = 1$
	\begin{equation}
	\mathbf{k}_{i+1} = v_{0}\mathbf{k}_{i}ln\mathbf{k}_{i}
	\end{equation}

	and taking absolute value 
	\begin{equation}
	\lvert \mathbf{k}_{i+1} \rvert = \lvert v_{0} \rvert \lvert \mathbf{k}_{i} \rvert \lvert ln\mathbf{k}_{i} \rvert
	\end{equation}

	we obtain 
	\begin{equation}
	\frac{\lvert \mathbf{k}_{i+1} \rvert}{\lvert \mathbf{k}_{i} \rvert}  = \lvert v_{0} \rvert \lvert ln\mathbf{k}_{i} \rvert < \lvert 		v_{0} \rvert \lvert \mathbf{k}_{i} \rvert 
	\end{equation}

	and rewriting 
	\begin{equation}
	\lvert \mathbf{k}_{i+1} \rvert < \lvert v_{0} \rvert \lvert \mathbf{k}_{i} \rvert^{2} 
	\end{equation}

	by hypothesis $ \lvert \mathbf{k}_{i} \rvert < 1, \forall i \in \mathbb{Z}_{+}$, in consequence

	\begin{equation}
	\lvert \mathbf{k}_{i+1} \rvert <  \lvert v_{0} \rvert \lvert \mathbf{k}_{i} \rvert^{2} \leq \lvert v_{0} \rvert \lvert 				\mathbf{k}_{i} \rvert 
	\end{equation}

	\begin{equation}
	\lvert \mathbf{k}_{i+1} \rvert < \lvert v_{0} \rvert \lvert \mathbf{k}_{i} \rvert
	\end{equation}
	\\
	therefore by Lemma \ref{lema1} if $\lvert v_{0} \rvert < 1$, then the system is exponentially stable.%\qed
\end{proof}
\vspace{1em}

These results about stability can be extended to the other two fundamental diagrams due to their relationships by means of expressions such as (\ref{yGreenbExp}) and (\ref{kiter1}). On the other hand, the implications in traffic situations from this theorem are directly related with decreasing or increasing amounts of traffic density $k$ and flow $q$, as will be better explained by the simulations performed and depicted in Section \ref{sec:Sims}.

The nonlinear nature of these equations leads to a quasi--periodic behaviour which reaches chaotic dynamics, whose trajectories are characterized by an exponential divergence of initially close points (as explained by \cite{Korsch1}). Instead of developing a stability analysis for such situations, we opt to take care about other type of measuring. Taking the case of one-dimensional discrete maps of an interval

\begin{equation}
k_{n+1}=f(k_{n}), \hspace{1cm} x \in [0,1]
\end{equation}

The so-called \textit{Lyapunov exponent} is a measure of the divergence of two orbits starting with slightly different initial conditions $k_{0}$ and $k_{0}+\Delta k_{0}$. The distance after $n$ iterations

\begin{equation}
\Delta k_{n} = \lvert f^{n}(k_{0}+\Delta k_{0})-f^{n}(k_{0}) \rvert
\end{equation}
increases exponentially for large $n$ for a chaotic orbit according to 

\begin{equation}
\Delta k_{n}\thickapprox \Delta k_{0}\,{\rm e}^\lambda_{L}
\end{equation}

It is possible to relate the Lyapunov exponent analytically to the average stretching along the orbit $k_{0},k_{1}=f(k_{0}),k_{2}=f(f(k_{0})),\ldots,k_{n}=f^{n}(k_{0})=f(f(f(\ldots(k_{0})\ldots))$. Through proper mathematical treament, it is possible to obtain:

\begin{equation}
\label{LogProd}
\ln \frac{\Delta k_{n}}{\Delta k_{0}} = \ln \left\vert \frac{f^{n}(k_{0}+\Delta k_{0})-f^{n}(k_{0})}{\Delta k_{0}} \right\vert \, \thickapprox \, \ln \left\vert   \frac{df^{n}(k)}{dk} \right\vert = \ln \prod_{j=0}^{n-1} \lvert f'(k_{j}) \rvert
\end{equation}
or equivalently

\begin{equation}
\label{LogSum}
\ln \frac{\Delta k_{n}}{\Delta k_{0}} = \sum_{j=0}^{n-1}\ln \lvert f'(k_{j}) \rvert
\end{equation}
and finally

\begin{equation}
\label{LyapExpCalc}
\lambda_{L} = \lim_{n\rightarrow \infty}\frac{1}{n}\ln \frac{\Delta k_{n}}{\Delta k_{0}} = \lim_{n\rightarrow \infty}\frac{1}{n}\sum_{j=0}^{n-1}\ln \lvert f'(k_{j}) \rvert
\end{equation}
where the logarithm of the linearized map is averaged over the orbit as $k_{0}$, $k_{1}$, $\ldots\,$, $k_{n-1}$. Negative values of the Lyapunov exponent indicate stability, and positive values chaotic evolution, where $\lambda_{L}$ measures the speed of exponential divergence of neighboring trajectories. At critical bifurcation points the Lyapunov exponent is zero (see \cite{Korsch1}).

\begin{figure}
\centering
\includegraphics[width=1.0\textwidth]{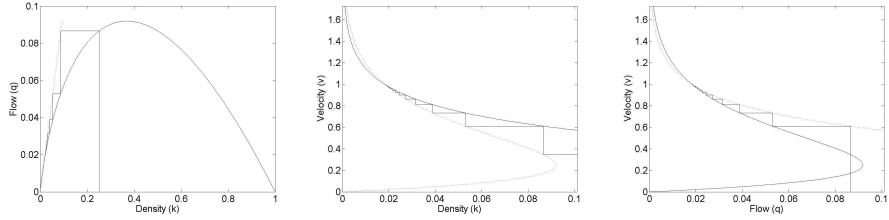}
\caption{Iterative behavior of the normalized fundamental diagrams. Optimum velocity $v_o=0.25$, initial condition $k=0.25$} \label{uo0p25}
\end{figure}

\section{\label{sec:Sims}Numerical Simulations}

As it has been shown through this document, discrete iterative equations have been defined for pairs of variables depicting fundamental diagrams density versus flow ($k$ vs $q$), density versus velocity ($k$ vs $v$) and flow versus velocity ($q$ vs $v$). These expressions are useful to perform simulations which plottings show interesting trajectories as parameter $v_0$ is varied. Each simulation is a set of 300 iterations.

If $k = 0.25$ is an initial condition with $v_{o}= 0.25$, a first series of iterations can be executed (Figure \ref{uo0p25}). Its possible to follow the trajectory of the iterations. The $q = k$ line is drawn over the flow-density plot, which is the most common depicted in literature. It is helpful in order to visualize the form in which iterations develop, represented by vertical and horizontal lines that intersect it from values calculated.

In that way, it can be noticed that starting from the initial condition the final value stops in a point near $q = k = 0.0183$, indicating that the final state stabilizes for any future time (iteration) in a free-flow value, like a transient of a road being emptied. The initial condition chosen, as well as the others that will be shown in the rest of this work, has been selected due to it exhibits a rapid transient, which permits a better view of the iterations behavior.

We are including the other two fundamental diagrams with their respective iterations carried on each value of $v_o$ for the respective initial condition. The plots that correspond with velocity--density and velocity--flow diagrams show the convergence to $v = 1$ and confirm the stable point for $k$ and $q$ as calculated in Section \ref{sec:MatConc}. These plots do not repeat the same idea of including a 45-degree line to yield the iterations, but it is possible to watch that the iterations follow not only the respective fundamental diagram profile, but they follow the profile of the missing variable diagram, i.e. if the iterations are over the velocity-density fundamental diagram, iterations also touch the velocity-flow fundamental diagram profile. This is a consequence of the relation among the three macroscopic variables. 
 
If the initial condition is changed, the transient will follow a distinct trajectory, but it will reach the same final value, that is to say, the steady state of both situations are equal. It is said that the point $q=k=0.0183$ (and $v=1$, simultaneously) is an \textit{attractor} for the trajectories of the system (as described by \cite{HillbornChaos}, for example).

As the parameter $v_o$ is varied, a set of final values result, which can be plotted as in Figure \ref{kvobif}, which is called a \textit{bifurcation map}. Vertical lines are drawn on it corresponding with the values of $v_o$ used and crossing with the final value reached. Since this work takes into account the three macroscopic variables for traffic, the $v-v_o$ bifurcation map is also presented (Figure \ref{vvobif}).

Figure \ref{uo1p25} shows another set of iterations for the three fundamental diagrams when optimum velocity is adjusted to $v_o=1.25$ and the initial condition is $q=k=0.1$. Again, a stable point is reached, but unlike the first set of iterations, where free-flow values were reach, in this case a set of congested-flow values are obtained, even though the initial density was for a free-flow regime. This simulates a congestion process in the network, due to characteristics on the road represented by $v_o$. This time the achieved values are $q=k=0.4493$ and the velocity is again $v=1$. Bifurcation maps \ref{kvobif} and \ref{vvobif} exhibit this values on their own. Like in the past simulation, if initial condition is changed only the transient is different, but the steady state reached at the end becomes the same again.

We have until now two distinct values of $v_0$ and it must be watched that the first one is a value $v_0<1$ while the second one $v_0>1$. Relating with Theorem \ref{th1} it is now possible to see that values $v_0<1$ represent a road that tends to get empty, while values $v_0>1$ represent a road that tends to congestion.

\begin{figure}[t]
\centering
\includegraphics[width=1.0\textwidth]{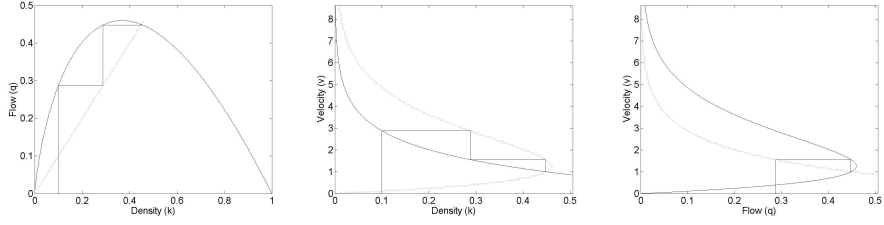}
\caption{Iterative behavior of the normalized fundamental diagrams. Optimum velocity $v_o=1.25$, initial condition $k=0.1$} \label{uo1p25}
\end{figure}
\begin{figure}[t]
\centering
\includegraphics[width=1.0\textwidth]{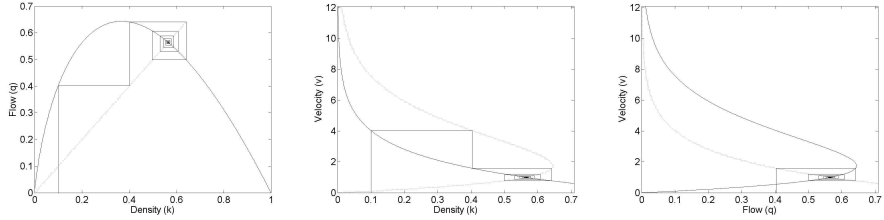}
\caption{Iterative behavior of the normalized fundamental diagrams. Optimum velocity $v_o=1.75$, initial condition $k=0.1$} \label{uo1p75}
\end{figure}
Keeping the initial condition but changing for $v_o=1.75$ the situation runs to another stable point, but the iterations become damped cyclic (Figure \ref{uo1p75}). As will be seen in brief, this behavior marks an important limit depicted by the bifurcation maps.

If $v_o= 2.25$ the attractor for traffic variables behaves purely cyclic (Figure \ref{uo2p25}). In fact, values for density and flow fluctuates among $q=k=0.3533$ and $q=k=0.8271$ and the values for velocity between $v=2.3409$ and $v=0.4272$, once the transients have vanished. This is corresponded in the bifurcation maps, where this value of optimum velocity marks
the region where two branches signal a set of pairs of final values. The physical meaning is an oscillatory response of the vehicles, which pass from near optimum values to congested-flow values from one time (iteration) to the next.

If optimum velocity is varied again to $v_o=2.405$ (Figure \ref{uo2p405}), with an initial condition in this case of $k=0.275$, traffic becomes a 4-period cycle, with values $q=k=[0.8496, 0.3330, 0.8806, 0.2692]$ and $v=[3.1560, 0.3919, 2.6446, 0.3057]$.

If attention is focused on the bifurcation maps the branches that correspond with sets of 4 values for each traffic variable runs in an interval that goes from approximately $v_o=2.395$ to $v_o=2.440$.

\begin{figure}[t]
\centering
\includegraphics[width=1.0\textwidth]{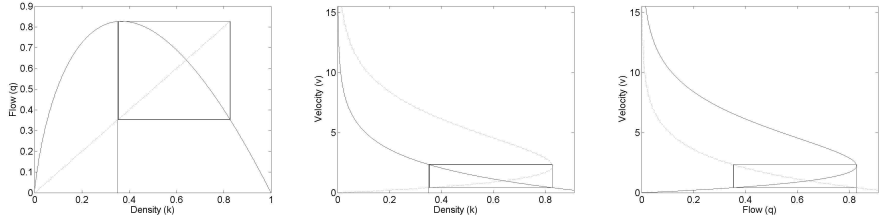}
\caption{Iterative behavior of the normalized fundamental diagrams. Optimum velocity $v_o=2.25$, initial condition $k=0.35$} \label{uo2p25}
\end{figure}
\begin{figure}
\centering
\includegraphics[width=1.0\textwidth]{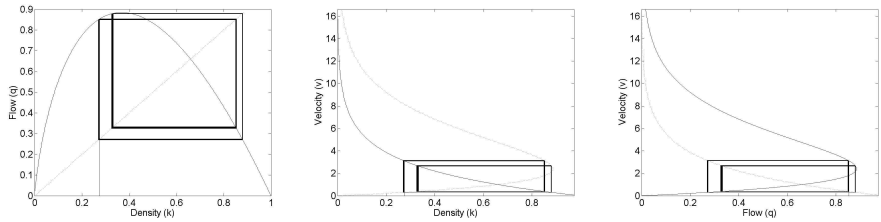}
\caption{Iterative behavior of the normalized fundamental diagrams. Optimum velocity $v_o=2.405$, initial condition $k=0.275$} \label{uo2p405}
\end{figure}
This process can continue. If now $v_o=2.48$, and initial density is $k=0.23$ the iterations are those of Figure \ref{uo2p48}, and related with the respective optimum velocity in the bifurcation maps. Cyclic behavior has become an $8-$period oscillation with $q = k = [0.8094,\, 0.4244,\, 0.9021,\, 0.2305,\, 0.8389,\, 0.3654,\, 0.9123,\, 0.2076]$ \hspace{0.5em} and \hspace{0.5em} $v=[3.8987,$\\
 $0.5243,\, 2.1256,\, 0.2555,\, 3.6392,\, 0.4356,\, 2.4965,\, 0.2276]$.

It is important to notice that the distance between two adjacent values of $v_o$ decreases for each set of bifurcations, and an $n-$period bifurcation reach more rapidly the next $2n-$period bifurcation, until they are very difficult to distinguish. This situation causes that a very little variation in the $v_o$ parameter could exhibit instabilities among trajectories. Even if the $v_o$ parameter keeps fixed, a very large number of final values can result, which are very sensitive to selection of the initial conditions.

\begin{figure}
\centering
\includegraphics[width=1.0\textwidth]{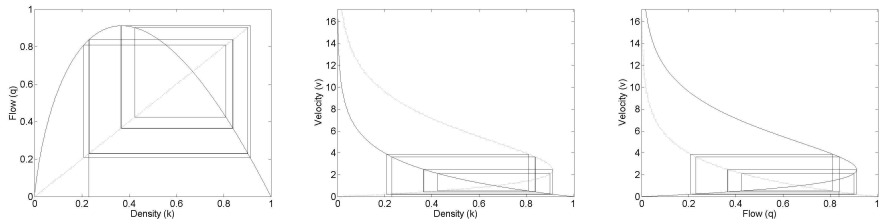}
\caption{Iterative behavior of the normalized fundamental diagrams. Optimum velocity $v_o=2.48$, initial condition $k=0.23$} \label{uo2p48}
\end{figure}
\begin{figure}
\centering
\includegraphics[width=1.0\textwidth]{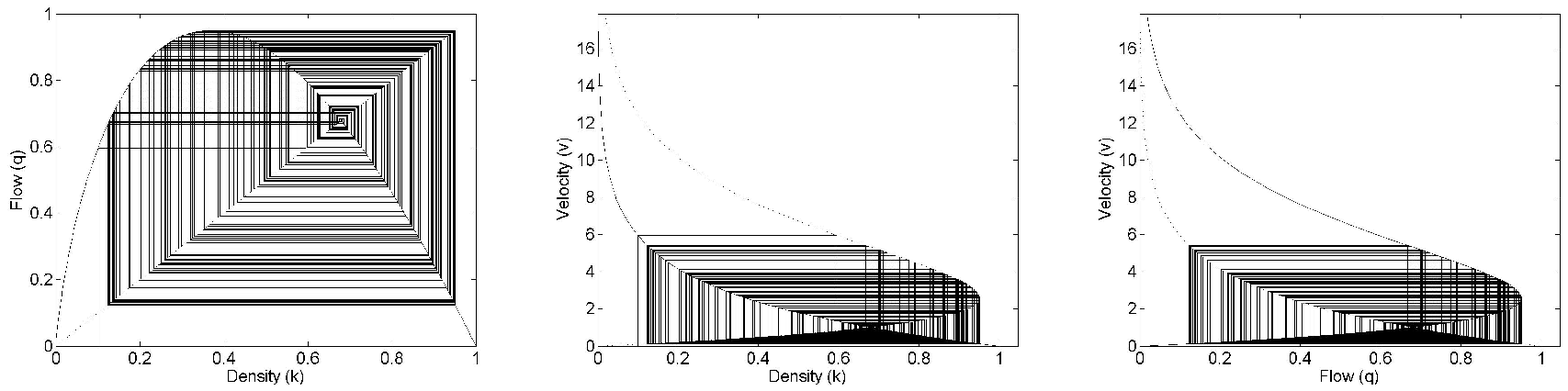}
\caption{Iterative (chaotic) behavior of the normalized fundamental diagrams. Optimum velocity $v_o=2.585$. Initial condition: $k=0.1$}
\label{uo2p585k0p1}
\end{figure}
\begin{figure}
\centering
\includegraphics[width=1.0\textwidth]{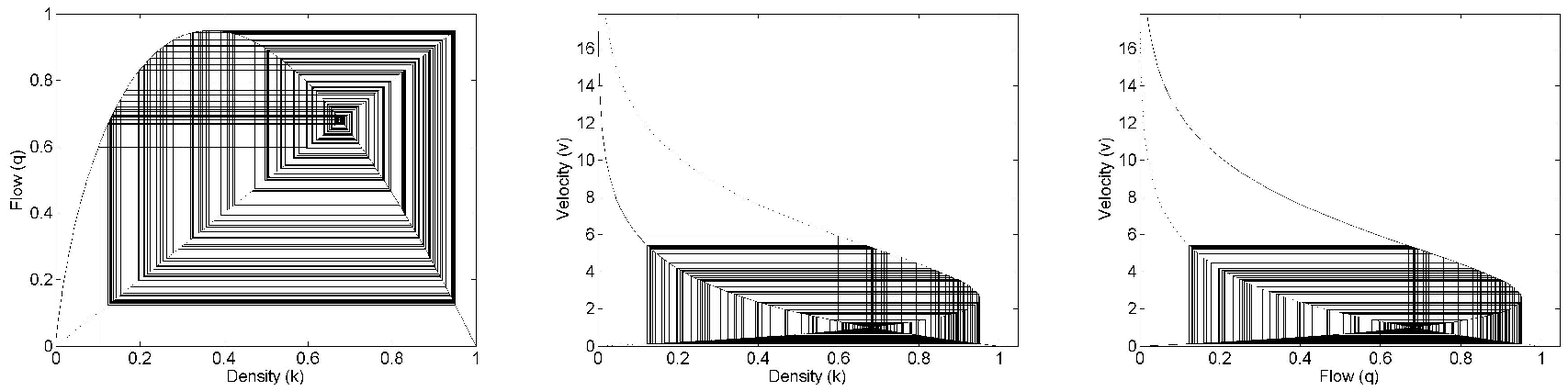}
\caption{Iterative (chaotic) behavior of the normalized fundamental diagrams. Optimum velocity $v_o=2.585$. Initial condition: $k=0.101$}
\label{uo2p585k0p101}
\end{figure}

\begin{figure}[!h]
\centering
\includegraphics[width=1.0\textwidth]{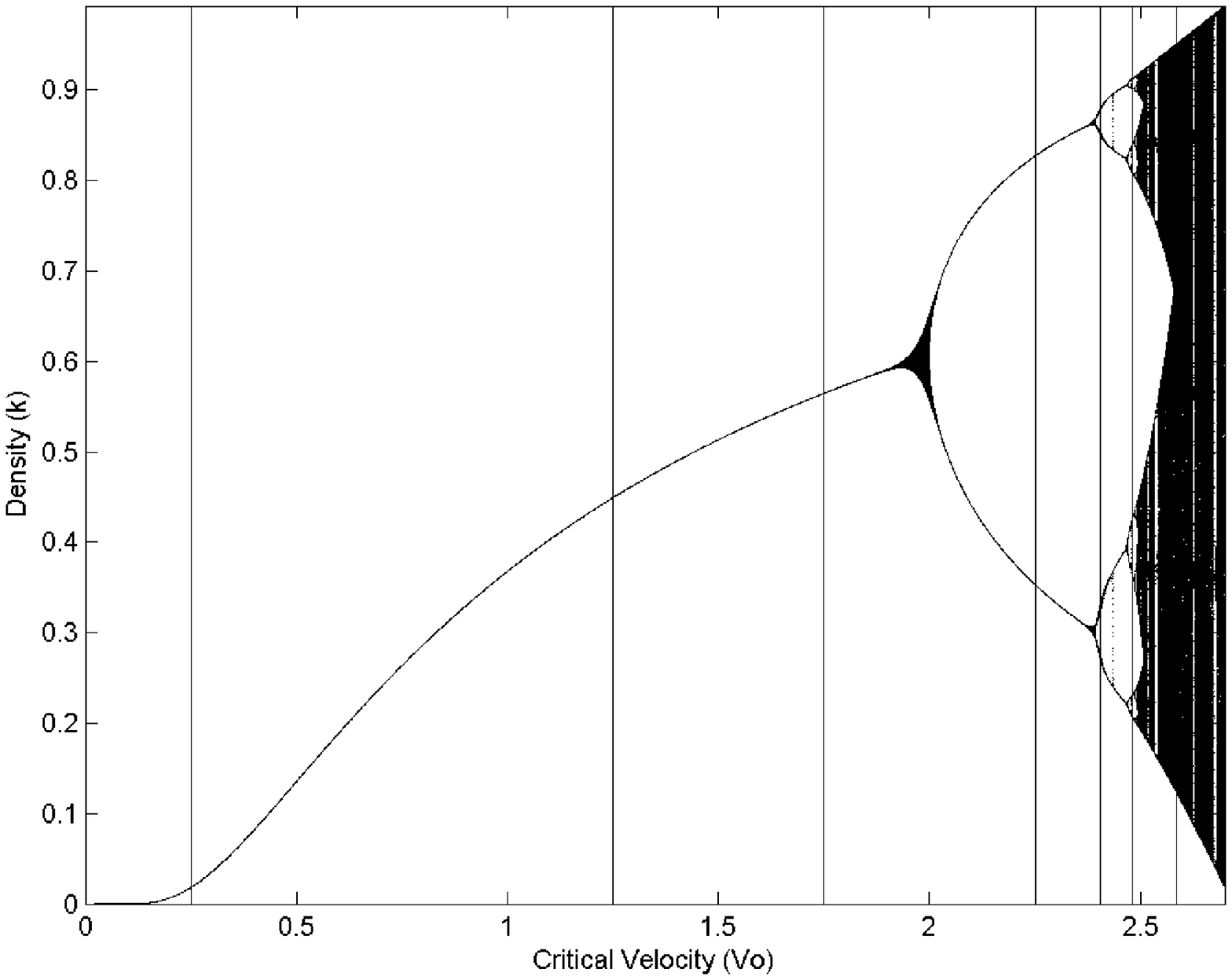}
\caption{$k-v_o$ bifurcation diagram}
\label{kvobif}
\end{figure}

An example of this situation is shown in Figure \ref{uo2p585k0p1} and Figure \ref{uo2p585k0p101}, where $v_o=2.585$ and $k=0.1$ for the first of them, which shows what it is called as \textit{chaotic trajectories}. The cyclic behavior is only apparent, and strictly speaking it does not exist because the different values do not exhibit a period where series of them can repeat themselves. This attractor is known as \textit{strange}.

The final values reached after 300 iterations are $q=k=0.8451$ and $v=4.0519$. If optimum velocity $v_o$ is left with no changes but the initial condition is slightly change to $q=k=0.101$, Figure \ref{uo2p585k0p101} depicts an apparent equal image than the last one, but a second sight will reveal that the trajectory is quite different. In fact, the final values after the same number of iterations now are $q=k=0.2020$ and $v=0.2199$.

This feature that consists in a crescent divergence between two trajectories that has very close initial states is referred as a deterministic but unpredictable behavior, proper of most nonlinear systems under certain set of parameter values. In a more linear set of trajectories, it is expected that for a small difference in initial conditions, the trajectories generated were nearby enough to not take into account that small difference or at least they are proportional. That is to say, for very close initial states, trajectories remain closely enough to achieved predictions about the system.

\begin{figure}[!h]
\centering
\includegraphics[width=1.0\textwidth]{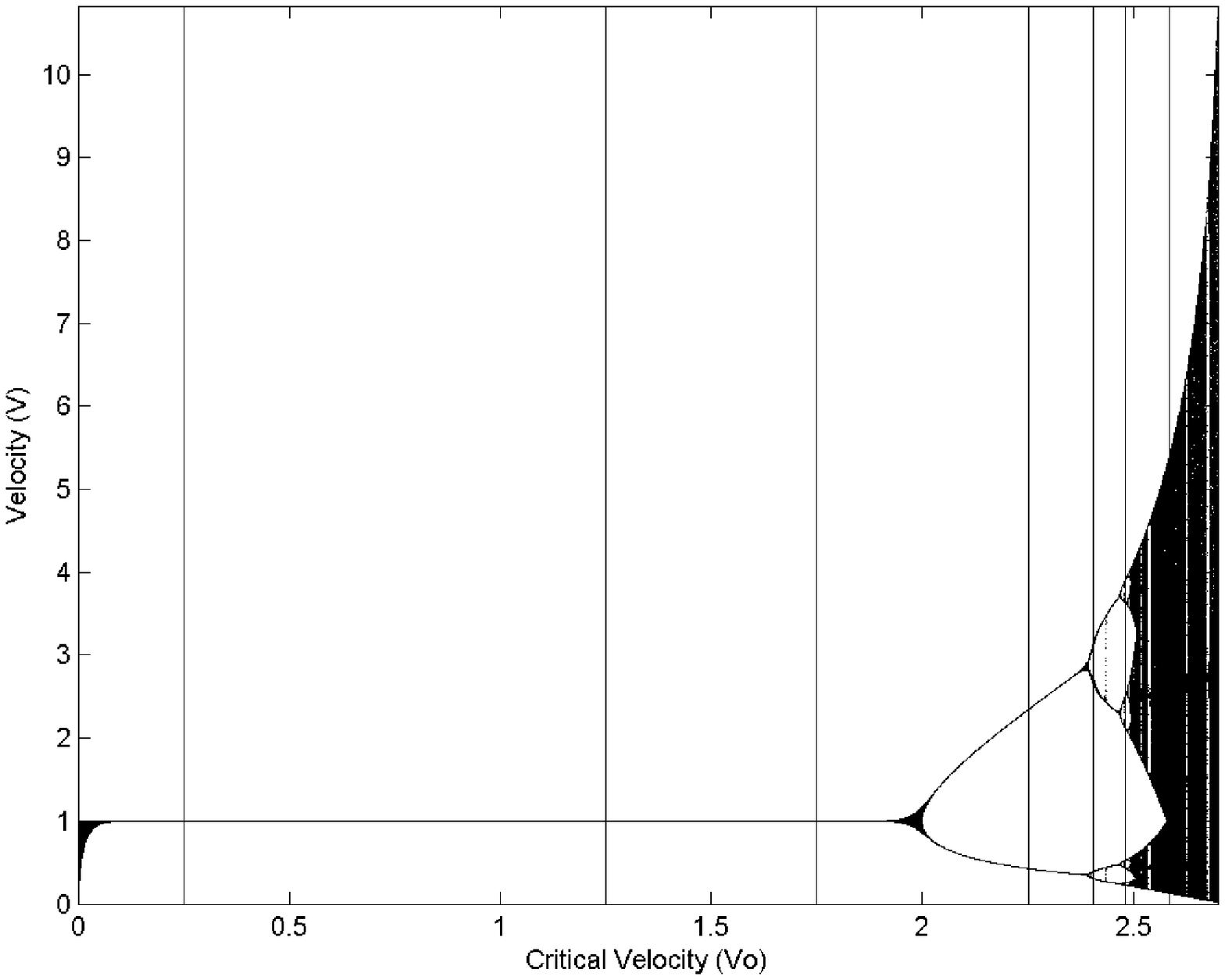}
\caption{$v-v_o$ bifurcation diagram}
\label{vvobif}
\end{figure}

However, as has been seen by the last iterations here included, this difference becomes important for a so-called chaotic system, due to in these cases a set of very close states will diverge. Physically, this means that it is impossible to perform a simulation of such a system with the intention of predicting future states, due to the real states in the beginning of such a simulations cannot be measure and transcribed to our calculations with infinite precision. Those infinitesimal differences among the measured and the real values will grow up rapidly, in an exponentially way as time (iterations) passes. Predictions will only be valid, under a tolerance, for a few future states.

\section{\label{sec:LyapExp}Analysis of Lyapunov Exponents}
Lyapunov exponents are considered a helpful tool to qualify stability of a nonlinear system (see \cite{Holmgren1994}). They are also useful to identify the chaotic feature of those kind of systems.

\begin{figure}[h]
\centering
\includegraphics[width=1.0\textwidth]{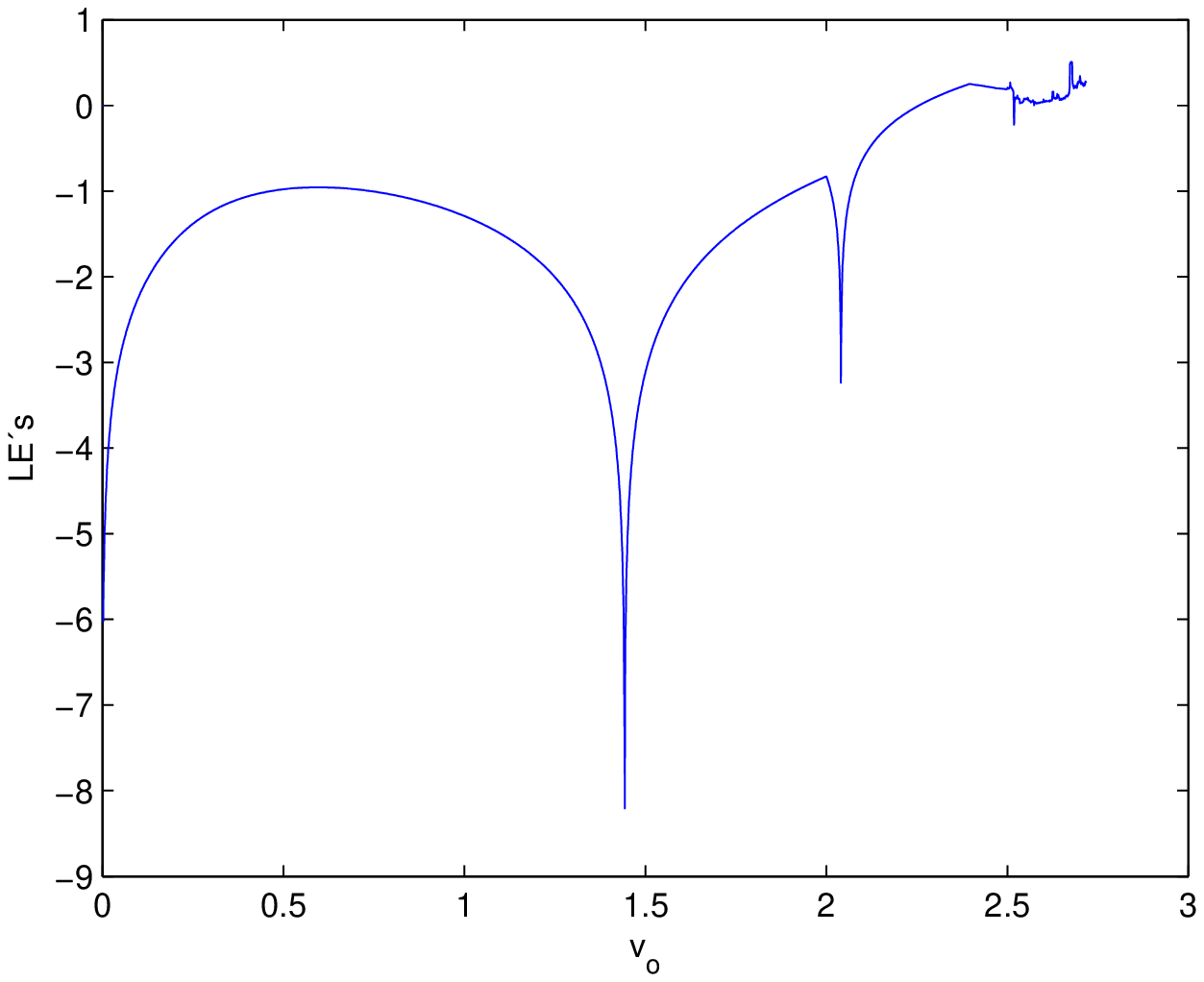}
\caption{Lyapunov exponent calculation}
\label{explyap1}
\end{figure}

\begin{figure}[h]
\centering
\includegraphics[width=1.0\textwidth]{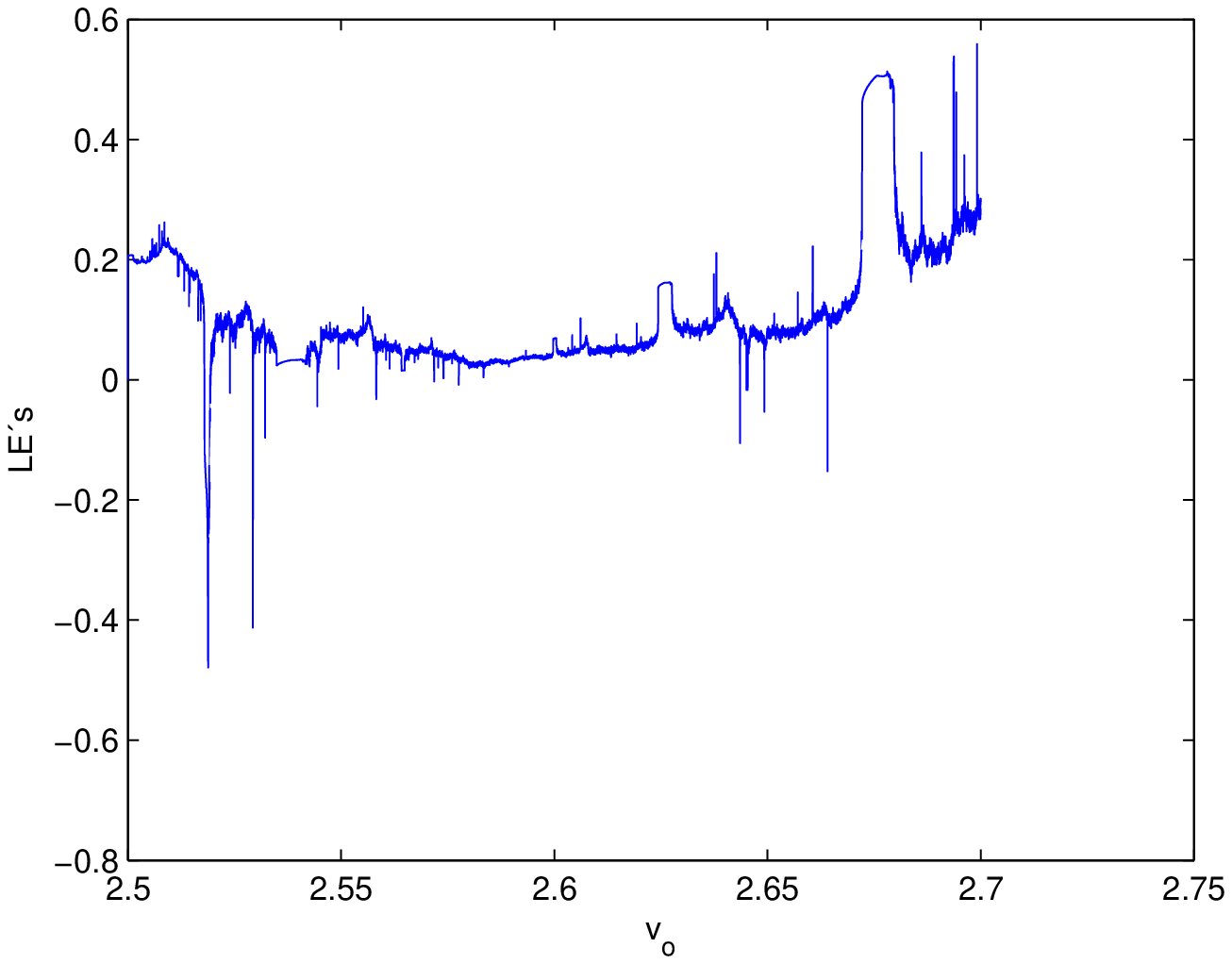}
\caption{Lyapunov exponent calculation. Zoom on $[2.5, \, 3.7]$}
\label{explyap2}
\end{figure}

Equation (\ref{LyapExpCalc}) is applied for such a calculation, which in this case $n=$ 10 000 terms.  When applying such a concept to an equation like (\ref{kiter}) it is possible to confirm the behaviors analyzed and observed in Sections \ref{sec:MatConc} and \ref{sec:Sims}.

In that way, the stable trajectories shown in Figures \ref{uo0p25}--\ref{uo1p75} that coincide with the curve that grows up from $v_0=0$ to $v_0=2.0$ in the mapping plots \ref{uo2p585k0p1} and \ref{uo2p585k0p101}, are confirmed by the negative values calculated for the Lyapunov exponents in Figure \ref{explyap1}.

There is a transition from that stable region starting on $v_0=2.0$ and ending in $v_0=2.5$ represented by the examples represented in the limit cycles depicted by Figures \ref{uo2p25}--\ref{uo2p48}. Lyapunov exponents are still negative in this interval.

However, from $v_0=2.5$ these exponents modify their sign. From this point, trajectories became chaotic, as depicted by Figures \ref{uo2p585k0p1} and \ref{uo2p585k0p101}, bifurcations in Figures \ref{kvobif} and \ref{vvobif} have become indistinguisable and Lyapunov exponents turn positive. Figure \ref{explyap2} is a magnification of the region for $v_o \geq 2.5$. 

These values and trajectories have another important feature. They are different from those analogous values obtained for a logistic mapping and are exclusive for the solution of Greenberg's model (\ref{kiter}).

\section{Concluding remarks} 
There are different models that deal with the relationship among the macroscopic variables density $k$, flow $q$ and velocity $v$, like Greenberg's model, which is well fitted for real data, and even though it is not well suited for free-flow velocities, it has been proved in a wide range of practical values, and its function solution is easy to manipulate to perform proper analyses of different behaviors exhibited as a parameter included in it is varied.

A normalization is carried on the variables density $k$ and flow $q$ in order to generalize results. The parameter that is varied in Greenberg's model is the critical velocity $v_0$, which modifies the scale of the fundamental diagrams that can be produced for that expression. This is equivalent to modify the conditions on a road network represented by those fundamental diagrams, giving as a consequence different behaviors in the modelled traffic.

One of the main contributions of these article is to include the fundamental diagrams of the flow--velocity and the velocity--density plots and to perform the respective iterations and analysis on them, instead of using solely the flow--density fundamental diagram as if it was a logistic map.

Iterations on values of density $k$ substituted in Greenberg's model to obtain flow $q$ are performed, and by the normalization made those values are directly taken as new values of density, which are again substituted to get new ones. This set of values are plotted with the help of a straight line $q=k$, emulating the evolution in time of those traffic values. In a similar and parallel manner, values of velicty $v$ are also calculated and plotted. 

When $v_o$ is changed from low to higher values, iterations draw trajectories that converge to stable, cyclic or chaotic values. It is possible to obtain a (bifurcation) map of final values for each $v_o$ modification. It is easy to recognized in this plot which ranges of the parameter $v_o$ give stable, cyclic or chaotic behavior. Another main contribution of this article is to show the velocity--optimum velocity bifurcation map obtained from the iteration schemes here performed.

A main feature of the bifurcations is that they split in $2^n$-period cycles, i. e. for the first time a bifurcation map splits in two branches, then it divides in four, latter in eight, sixteen and so on. But this is also done in shorter intervals and cannot continue for ever. Soon, the period $2^n$ is juxtaposed to the next $2^{n+1}$ and any little variation in the parameter $v_o$ or in the initial conditions will give an unstable situation between branches, which implies big differences between two final states for very close initial values of those states. This sensitivity to initial conditions is a well known feature of chaotic systems, like the cases here presented.

Chaos is mainly characterized by the sensitivity of the system to initial conditions, i.e. two trajectories starting from very nearby initial values diverge from each other. A measure of this divergence is the Lyapunov exponents, which are negative for stable and cyclic trajectories, but are positive for chaotic ones.

Chaotic behavior is mainly characterized by the sensitivity of the system to initial conditions, that is, two trajectories of variables that start from very nearby initial values diverge from each other. A measure of this divergence is the Lyapunov exponents, which are negative for stable and cyclic trajectories, but is positive for chaotic ones. Further analyses of the limit cycles and chaotic regions can be carried on since a point of view of Lyapunov stability, extending the study of this new approach in further work.

\end{document}